%
%
%
%

\documentclass[11pt]{article}
\usepackage[a4paper]{anysize}\marginsize{1.5cm}{1.5cm}{1.5cm}{1cm}
\pdfpagewidth=\paperwidth \pdfpageheight=\paperheight
\usepackage{amsfonts,amssymb,amsthm,amsmath,eucal,tabu,url}
\usepackage{pgf}
 \usepackage{array}
 \usepackage{pstricks}
 \usepackage{pstricks-add}
 \usepackage{pgf,tikz}
 \usetikzlibrary{automata}
 \usetikzlibrary{arrows}
 \usepackage{indentfirst}
 \pagestyle{myheadings}



\theoremstyle{plain}
\newtheorem{thm}{Theorem}[section]
\newtheorem{theorem}[thm]{Theorem}

\newtheorem{proposition}[thm]{Proposition}

\theoremstyle{definition}
\newtheorem{definition}[thm]{Definition}

\newtheorem{example}[thm]{Example}

\newtheorem{problem}[thm]{Problem}

\newtheorem{thevarthm}[thm]{\varthmname}

\newenvironment{varthm*}[1]{\trivlist\item[]{\bf #1.}\it}{\endtrivlist}


\renewcommand\geq{\geqslant}

\renewcommand\leq{\leqslant}

\let\tilde=\widetilde

\newcommand\be{\begin{eqnarray*}}
\newcommand\ee{\end{eqnarray*}}

\newcommand\newop[2]{\def#1{\mathop{\rm #2}\nolimits}}
\newop\log{log}
\newop\ord{ord}
\newop\Gal{Gal}
\newop\SL{SL}
\newop\Bl{Bl}
\newop\mult{mult}
\newop\mass{mass}
\newop\div{div}
\newop\codim{codim}
\newop\sing{sing}
\newop\vdim{vdim}
\newop\edim{edim}
\newop\Ass{Ass}
\newop\size{size}
\newop\reg{reg}
\newop\satdeg{satdeg}
\newop\supp{supp}
\newop\Neg{Neg}
\newop\Nef{Nef}
\newop\Nefh{Nef_H}
\newop\Eff{Eff}
\newop\Zar{Zar}
\newop\MB{MB}
\newop\MBxC{MB\mathit{(x,C)}}
\newop\NnB{NnB}
\newop\Bigg{Big}
\newop\Effbar{\overline{\Eff}}

\def\keywordname{{\bfseries Keywords}}%
\def\keywords#1{\par\addvspace\medskipamount{\rightskip=0pt plus1cm
\def\and{\ifhmode\unskip\nobreak\fi\ $\cdot$
}\noindent\keywordname\enspace\ignorespaces#1\par}}
\def\subclassname{{\bfseries Mathematics Subject Classification
(2000)}\enspace}
\def\subclass#1{\par\addvspace\medskipamount{\rightskip=0pt plus1cm
\def\and{\ifhmode\unskip\nobreak\fi\ $\cdot$
}\noindent\subclassname\ignorespaces#1\par}}

\begin{document}
\title{Hirzebruch-type inequalities and plane curve configurations}
\author{Piotr Pokora}

\date{\today}
\maketitle
\thispagestyle{empty}
\begin{abstract}
In this paper we come back to a problem proposed by F. Hirzebruch in the 1980's, namely whether there exists a configuration of smooth conics in the complex projective plane such that the associated desingularization of the Kummer extension is a ball quotient. We extend our considerations to the so-called $d$-configurations of curves on the projective plane and we show that in most cases for a given configuration the associated desingularization of the Kummer extension is not a ball quotient. Moreover, we provide improved versions of Hirzebruch-type inequality for $d$-configurations. Finally, we show that the so-called characteristic numbers (or $\gamma$ numbers) for $d$-configurations are bounded from above by $8/3$. At the end of the paper we give some examples of surfaces constructed via Kummer extensions branched along conic configurations.

\keywords{curve configurations, Hirzebruch inequalities, ball-quotients}
\subclass{14C20, 52C35, 32S22}
\end{abstract}

\section{Introduction}
In this paper we come back to a question proposed by F. Hirzebruch. As we can read in a paper of I. Naruki \cite{Naruki}, Hirzebruch asked about interesting abelian covers of the complex projective plane branched along configurations of several conics. Naruki considered configurations of conics with nodes and tacnodes as singularities, unfortunately with no application into the direction of Hirzebruch's question. This topic was also studied later by Tang in \cite{Tang}, where the author constructed, in following to Hirzebruch's idea, abelian covers of the projective plane branched along configurations of smooth conics having pairwise tranvsersal intersection points. The main aim of Tang's paper was to find examples of conic confgurations such that the associated abelian cover (in fact the minimal desingularization) is a ball quotient, i.e., the universal cover of this surface is the unit ball. It turned out that Tang was not able to find such examples. In the present paper we extend Hirzebruch's question to a natural generalization of line and conic configurations, i.e., $d$-configurations, and we show that in most cases Hirzebruch's construction does not provide new examples of ball quotients -- there is one (combinatorial) family of curve configurations which potentially allows one to construct new ball quotients. In addition, we show two improvements of Hirzebruch-type inequalities obtained in \cite{PRSz, Tang} using results of Miyaoka \cite{M84} and Sakai \cite{Sakai}, and we show that the so-called characteristic numbers for $d$-configurations are bounded by $8/3$.

In the paper we work only over the complex numbers.
\section{Hirzebruch-type inequalities}
In his pioneering paper Hirzebruch \cite{Hirzebruch} constructed some new examples of algebraic surfaces which are ball quotients, i.e., surfaces of general type satisfying equality in the Bogomolov-Miyaoka-Yau inequality \cite{M84}
$$K_{X}^{2} \leq 3e(X),$$
where $K_{X}$ denotes the canonical divisor and $e(X)$ is the topological Euler characteristic. The key idea of Hirzebruch, which enabled constructing these new ball quotients, is that one can consider abelian covers of the complex projective plane branched along line configurations \cite{BHH87}. It turned out that Hirzebruch's construction can be performed for the so-called degree $d$-configurations with $d\geq 2$ -- see \cite{PRSz, Tang}.
 
\begin{definition}
Let $\mathcal{C} = \{C_{1}, ..., C_{\tau}\} \subset \mathbb{P}^{2}$ be a configuration of curves. Then $\mathcal{C}$ is a $d$-configuration of $\tau \geq 4$ curves if
\begin{itemize}
\item all irreducible components are smooth curves of degree $d \geq 1$,
\item all intersection points are transversal (i.e., pairwise intersections of curves are transversal),
\item there is no point where all curves meet.
\end{itemize}
\end{definition}
Let $\mathcal{C} = \{C_{1}, ..., C_{\tau} \} \subset \mathbb{P}^{2}$ be a $d$-configuration with $d \geq 2$. Now we can consider the Kummer extension of exponent $n \geq 2$ having degree $n^{\tau-1}$ and Galois group $(\mathbb{Z}/n\mathbb{Z})^{\tau-1}$ defined as the function field
$$K: = \mathbb{C}\left(z_{1}/z_{0}, z_{2}/z_{0}\right)\left((C_{2}/C_{1})^{1/n}, ...,(C_{\tau}/C_{1})^{1/n}\right).$$
This Kummer extension is an abelian extension of the function field of the complex projective plane.
It can be shown that $K$ determines an algebraic surface $\pi : X_{n} \rightarrow \mathbb{P}^{2}$ with normal singularities which ramifies over the plane with the configuration as the locus of the ramification. It can be shown, just as in the case of Hirzebruch's paper, that $X_{n}$ is singular over a point $p$ iff $p$ is a point of multiplicity $r_{p} \geq 3$ in $\mathcal{C}$. After blowing up these singular points we obtain a smooth surface $\rho : Y_{n}^{\mathcal{C}} \rightarrow X_{n}$. 

It turns out that the Chern numbers of $Y_{n}^{\mathcal{C}}$ can be read off directly from the combinatorics:
$$c_{2}(Y^{\mathcal{C}}_{n})/n^{\tau-3} = n^{2}(3 +(d^{2}-3d)\tau +f_{1}-f_{0})+n(-(d^{2}-3d)\tau -2f_{1}+2f_{0})+f_{1}-t_{2},$$
$$c_{1}^{2}(Y^{\mathcal{C}}_{n})/n^{\tau-3} = n^{2}( 9+d^{2}\tau -6d\tau  + 3f_{1} - 4f_{0}) +2n(-(d^{2}-3d)\tau -2f_{1}+2f_{0}) + 3d\tau + (d^{2}-3d)\tau +f_{1}-f_{0}+t_{2},$$
where $t_{r}$ denotes the number of $r$-fold points (i.e., points where exactly $r$ curves meet), $f_{0} = \sum_{r\geq 2}t_{r}$ and $f_{1} = \sum_{r\geq 2}r t_{r}$.
Moreover, it can be shown that $Y_{n}^{\mathcal{C}}$ has non-negative Kodaira dimension if $t_{\tau}= 0$ and $n \geq 2$ (more precisely, if $n\geq 3$, then all surfaces are of general type, and if $n=2$, then they are either elliptic or of general type) and in these cases we have $K_{Y_{n}^{\mathcal{C}}}^{2} \leq 3e(Y_{n}^{\mathcal{C}})$. Now we can define the following Hirzebruch polynomial:
\begin{equation}
\label{Hirzpoly}
H_{\mathcal{C}}(n) = \frac{3e(Y_{n}^{\mathcal{C}}) - K_{Y_{n}^{\mathcal{C}}}^{2}}{n^{\tau -3}} = n^{2}(f_{0} + (2d^{2}-3d)\tau) + n(-(d^{2}-3d)\tau -2f_{1} + 2f_{0}) + 2f_{1} + f_{0} -d^{2}\tau -4t_{2}
\end{equation}
and by the Bogomolov-Miyaoka-Yau inequality we have $H_{\mathcal{C}}(n) \geq 0$ (provided that $n \geq 2$). If there exists a configuration of curves $\mathcal{C}$ such that there exists $m \in \mathbb{Z}_{\geq 2}$ with $H_{\mathcal{C}}(m)=0$, then $Y_{m}^{\mathcal{C}}$ is a ball quotient. Let us introduce the following notion.
\begin{definition}
The surface $Y_{n}^{\mathcal{C}}$ obtained as the minimal desingularization of the Kummer extension of order $n^{\tau -1}$ branched along a given $d$-configuration $\mathcal{C}$ is called the \emph{Kummer cover}.
\end{definition}

If $X$ is a surface of general type and contains rational or elliptic curves, then the universal cover of $X$ cannot be the unit ball and hence $c_{1}^{2}(X) <3c_{2}(X)$. Therefore in this case one should be able to find a positive constant $m$ such that
$$3c_{2}(X) - c_{1}^{2}(X) \geq m.$$
It turns out that the constant $m$ can be explicitly computed under the assumption that the mentioned curves are smooth, which is provided by the results of Miyaoka \cite{M84} and Sakai \cite{Sakai}. Let us point out here that the result below has appeared for the first time in this form in \cite{Hirzebruch1}.
\begin{theorem}(Miyaoka-Sakai's improvement)
Let $X$ be a smooth surface of general type and $E_{1}, ..., E_{k}$ configurations (disjoint to each other) of rational curves (arising from quotient singularities) and $C_{1}, ..., C_{p}$ smooth elliptic curves (disjoint to each other and disjoint to the $E_{i}$'s). Let $c_{1}^{2}(X), c_{2}(X)$ be the Chern numbers of $X$. Then
$$3c_{2}(X) - c_{1}^{2}(X) \geq \sum_{j=1}^{p}(-C_{j}^{2}) + \sum_{i=1}^{k}m(E_{i}),$$
where $m(E_{i})$ is a constant which depends on the configuration.
\end{theorem}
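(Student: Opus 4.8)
The plan is to deduce the inequality from the orbifold-logarithmic version of the Bogomolov--Miyaoka--Yau inequality, so that the two sums on the right-hand side appear as two independent pieces of bookkeeping: the configurations $E_i$ enter through Miyaoka's orbifold correction \cite{M84}, and the elliptic curves $C_j$ through Sakai's logarithmic correction \cite{Sakai}. Concretely, I would first contract the configurations. Since each $E_i$ arises from a quotient singularity, the morphism $f\colon X\to\overline{X}$ collapsing all of the $E_i$ produces a normal projective surface $\overline{X}$ with log-terminal quotient singularities $p_1,\dots,p_k$, whose local fundamental groups $G_i$ record the combinatorics of $E_i$; this $\overline{X}$ is still of general type. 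Because the $C_j$ are disjoint from every $E_i$ by hypothesis, their images stay away from the singular locus, so $D:=\sum_j C_j$ descends to a reduced boundary divisor on $\overline{X}$, and I would apply the orbifold-logarithmic BMY inequality to the pair $(\overline{X},D)$,
\[
(K_{\overline{X}}+D)^2 \;\leq\; 3\,\bigl(e_{\mathrm{orb}}(\overline{X})-e(D)\bigr),
\]
where $e_{\mathrm{orb}}$ denotes the orbifold Euler number.

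Next I would unwind both sides. On the divisorial side, the elliptic curves are handled purely by adjunction: each $C_j$ has $e(C_j)=0$, so $e(D)=0$, while $2g-2=(K_X+C_j)\cdot C_j=0$ gives $K_{\overline{X}}\cdot C_j=K_X\cdot C_j=-C_j^2$ (using that $f$ is an isomorphism near $C_j$). Expanding the disjoint boundary, $(K_{\overline{X}}+D)^2=K_{\overline{X}}^2+2\sum_j(-C_j^2)+\sum_j C_j^2=K_{\overline{X}}^2+\sum_j(-C_j^2)$, which isolates the first sum in the claim. For the quotient-singularity side I would use the discrepancy formula $f^{*}K_{\overline{X}}=K_X-\Delta$ with $\Delta=\sum_i\Delta_i$ supported on $\bigcup_i E_i$; since $f^{*}K_{\overline{X}}\cdot E=0$ one gets $K_{\overline{X}}^2=K_X^2-\Delta^2=K_X^2+\sum_i(-\Delta_i^2)$, and the standard replacement of each singular point's contribution $1$ by $1/|G_i|$ yields $e_{\mathrm{orb}}(\overline{X})=e(X)-\sum_i\bigl(e(E_i)-1/|G_i|\bigr)$.

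Substituting these into the orbifold-log BMY inequality and rearranging gives exactly
\[
3e(X)-K_X^2\;\geq\;\sum_{j}(-C_j^2)+\sum_i\Bigl(-\Delta_i^2+3\bigl(e(E_i)-1/|G_i|\bigr)\Bigr),
\]
so that $m(E_i):=-\Delta_i^2+3\bigl(e(E_i)-1/|G_i|\bigr)$ is a local quantity determined solely by the configuration $E_i$, as asserted. Here $3c_2(X)=3e(X)$ and $c_1^2(X)=K_X^2$.

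The main obstacle is not the bookkeeping above but the justification of the orbifold-logarithmic BMY inequality for $(\overline{X},D)$: one must verify that the pair is of log general type and that $K_{\overline{X}}+D$ has the positivity required, and the genuine content is the semistability and Bogomolov-type inequality $c_1^2\leq 3c_2$ for the parabolic (orbifold) logarithmic cotangent sheaf attached to $(\overline{X},D)$. This is precisely the input supplied by \cite{M84} for the quotient-singularity part and by \cite{Sakai} for the logarithmic part, and it is what makes the two corrections combine additively; the remaining care is the explicit evaluation of $-\Delta_i^2$ and $|G_i|$ from the dual graph of each $E_i$, which determines $m(E_i)$ concretely. I would also note that for $X$ of general type one has $C_j^2<0$, so the contributions are genuinely positive, in agreement with the discussion preceding the statement and with \cite{Hirzebruch1}.
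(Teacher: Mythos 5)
The paper does not actually prove this statement: it is quoted as a known result, attributed to Hirzebruch's paper \cite{Hirzebruch1} and resting on the work of Miyaoka \cite{M84} and Sakai \cite{Sakai}, so there is no in-paper argument to compare yours against. Your reconstruction of the standard derivation is essentially correct and well organized: the bookkeeping is right (the adjunction step $K_X\cdot C_j=-C_j^2$, the identity $(K_{\overline{X}}+D)^2=K_{\overline{X}}^2+\sum_j(-C_j^2)$ for disjoint elliptic curves with $e(C_j)=0$, the discrepancy computation $K_{\overline{X}}^2=K_X^2-\Delta^2$ using $f^*K_{\overline{X}}\cdot\Delta=0$, and the orbifold Euler number formula), and your resulting local invariant $m(E_i)=-\Delta_i^2+3\bigl(e(E_i)-1/|G_i|\bigr)$ reproduces the value $m(E_i)=9/2$ that the paper quotes from \cite{Hemperly} for a single rational $(-2)$-curve ($\Delta_i=0$, $e(E_i)=2$, $|G_i|=2$), which is a good consistency check. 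The one thing to be clear about is that your argument does not prove the theorem so much as reduce it to the orbifold-logarithmic Bogomolov--Miyaoka--Yau inequality for the pair $(\overline{X},D)$ --- you say this yourself, and that reduction is exactly how the result is obtained in the literature --- so as a \emph{proof} it is complete only modulo that cited input, together with the positivity/Zariski-decomposition hypotheses needed to apply it; this is the same level of rigor as the paper itself, which simply cites the result.
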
 

Let us come back here to Kummer extensions. Let $g: Z \rightarrow \mathbb{P}^{2}$ be the blow up of the projective plane along singular points of $\mathcal{C}$ with multiplicities $\geq 3$. It is known that there exists a morphism $\sigma: Y_{n}^{\mathcal{C}} \rightarrow Z$ such that $\rho \pi = g \sigma$. Let $q \in X_{n}$ be singular, then $C = \rho^{-1}(p)$ is a curve and $p = \pi(q)$ satisfies that the multiplicity $r_{p}$ of this point is $\geq 3$. Denote by $E_{p}$ the exceptional curve in $Z$ over $p$. Using the Hurzwitz's formulae we have
$$2-2g(C) =  n^{r_{p}-1}(2-r_{p}) + n^{r_{p}-2}r_{p}.$$
Moreover, observe that $\sigma^{*}E_{p}$ consists of $n^{\tau - r_{p}-1}$ disjoint curves $C$ (i.e., the so-called Fermat curves \cite[p.~28]{BHH87}), each of multiplicity $n$ and $C^{2} = -n^{r_{p}-2}$. We need the following general result for Kummer extensions which was formulated en passant in \cite[p.~140]{BHH87}.
\begin{proposition}
For $n=2$, $C$ is rational if and only if $r_{p} = 3$ and $C$ is elliptic if and only if $r_{p}=4$. If $n=3$, then $C$ is never rational and $C$ is elliptic if and only if $r_{p}=3$. If $n\geq 4$, then $C$ is neither elliptic nor rational.
\end{proposition}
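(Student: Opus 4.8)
The plan is to convert the given Hurwitz relation into a closed formula for $g(C)$ and then read off the rationality and ellipticity conditions as elementary Diophantine constraints in the two parameters $n \geq 2$ and $r_p \geq 3$ (recall $r_p \geq 3$ since $p$ is a singular point of multiplicity $\geq 3$).

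First I would factor $n^{r_p-2}$ out of the right-hand side of the Hurwitz formula. Since
$$2 - 2g(C) = n^{r_p-1}(2-r_p) + n^{r_p-2}r_p = n^{r_p-2}\bigl(2n - r_p(n-1)\bigr),$$
the genus is $g(C) = 1 - \tfrac12 n^{r_p-2}\bigl(2n - r_p(n-1)\bigr)$. Because $n \geq 2$ and $r_p \geq 3$, the factor $n^{r_p-2}$ is a positive integer that is at least $n \geq 2$; this single observation drives the whole argument.

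Next I would treat the two cases separately. The curve $C$ is elliptic exactly when $g(C) = 1$, i.e. when $2n - r_p(n-1) = 0$, equivalently $r_p = 2 + \tfrac{2}{n-1}$. Requiring $r_p$ to be an integer $\geq 3$ leaves only $n = 2$ (giving $r_p = 4$) and $n = 3$ (giving $r_p = 3$); for $n \geq 4$ one has $0 < \tfrac{2}{n-1} < 1$, so $r_p < 3$ and there is no admissible solution. The curve $C$ is rational exactly when $g(C) = 0$, i.e. when $n^{r_p-2}\bigl(2n - r_p(n-1)\bigr) = 2$. Here the positive integer $n^{r_p-2} \geq 2$ must divide $2$, forcing $n^{r_p-2} = 2$, hence $n = 2$ and $r_p = 3$; one then checks that the bracket equals $1$, so this solution is genuine and is the only one.

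Collecting these computations yields the stated trichotomy: for $n = 2$ rationality occurs precisely at $r_p = 3$ and ellipticity precisely at $r_p = 4$; for $n = 3$ rationality never occurs while ellipticity occurs precisely at $r_p = 3$; and for $n \geq 4$ neither occurs, since $2n - r_p(n-1) \leq 3-n < 0$ for all $r_p \geq 3$, forcing $g(C) \geq 2$. I do not expect a genuine obstacle here: the content is a short manipulation of the Hurwitz formula followed by the elementary divisibility and integrality analysis above, the only subtlety being to keep the constraint $r_p \geq 3$ in force so that the borderline small cases are correctly excluded.
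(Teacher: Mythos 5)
Your proof is correct, and it is essentially the argument the paper has in mind: the paper states the Hurwitz relation $2-2g(C)=n^{r_p-1}(2-r_p)+n^{r_p-2}r_p$ immediately before the proposition and then simply cites \cite{BHH87} rather than carrying out the elementary genus analysis, which is exactly what you supply. Your factorization $2-2g(C)=n^{r_p-2}\bigl(2n-r_p(n-1)\bigr)$ and the ensuing integrality/divisibility case check (using $r_p\geq 3$, $n\geq 2$) are accurate and complete.
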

In other words, if $n=2$ we know that $Y_{n}^{\mathcal{C}}$ has at least $t_{3}2^{\tau-4}$ rational curves with the self-intersection $-2$ and at least $t_{4}2^{\tau-5}$ elliptic curves of self-intersection $-4$. If $n=3$, then we have $t_{3}3^{\tau - 4}$ elliptic curves of self-intersection $-3$. Moreover, by \cite{Hemperly} we know that if $E_{i}$ consists of a single rational curve, then $m(E_{i}) = \frac{9}{2}$.
Now we are ready to show our first result.
\begin{theorem}
Let $\mathcal{C}$ be a $d$-configuration. Then
$$\left(\frac{7}{2}d^{2} - \frac{9}{2}d \right)\tau + t_{2} + \frac{3}{4}t_{3} \geq \sum_{r\geq 5}(r-4)t_{r}.$$
\end{theorem}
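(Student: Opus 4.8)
The plan is to apply the Miyaoka--Sakai improvement to the Kummer cover with exponent $n=3$, i.e.\ to the surface $Y_{3}^{\mathcal{C}}$. Since a $d$-configuration has no point through which all curves pass we have $t_{\tau}=0$, so by the remark recorded above $Y_{3}^{\mathcal{C}}$ is a smooth surface of general type and the theorem is applicable. The reason for preferring $n=3$ over $n=2$ is the Proposition: for $n=3$ there are \emph{no} rational curves among the fibres $C=\rho^{-1}(p)$, while the elliptic ones lie exactly over the triple points $r_{p}=3$. Each such point contributes $3^{\tau-4}$ pairwise disjoint smooth elliptic curves, each of self-intersection $C^{2}=-3^{\,r_{p}-2}=-3$. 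Hence I would take, in the notation of the Miyaoka--Sakai statement, $k=0$ rational configurations and $p=t_{3}\cdot 3^{\tau-4}$ disjoint elliptic curves $C_{j}$.

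Feeding this into the inequality gives
$$3c_{2}(Y_{3}^{\mathcal{C}})-c_{1}^{2}(Y_{3}^{\mathcal{C}}) \ \geq\ \sum_{j}(-C_{j}^{2}) \ =\ 3\cdot t_{3}\cdot 3^{\tau-4}\ =\ t_{3}\cdot 3^{\tau-3}.$$
On the other hand, by the very definition of the Hirzebruch polynomial in \eqref{Hirzpoly}, and since $e=c_{2}$ and $K^{2}=c_{1}^{2}$, the left-hand side equals $H_{\mathcal{C}}(3)\cdot 3^{\tau-3}$. Dividing through by $3^{\tau-3}$ therefore reduces the whole statement to the single scalar inequality $H_{\mathcal{C}}(3)\geq t_{3}$.

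It then remains to evaluate $H_{\mathcal{C}}(3)$ and substitute $f_{0}=\sum_{r\geq2}t_{r}$, $f_{1}=\sum_{r\geq2}rt_{r}$. A direct computation from \eqref{Hirzpoly} gives
$$H_{\mathcal{C}}(3)=\left(14d^{2}-18d\right)\tau+4t_{2}+4t_{3}-4\sum_{r\geq5}(r-4)t_{r},$$
where the coefficient of $t_{4}$ conveniently vanishes. Thus $H_{\mathcal{C}}(3)\geq t_{3}$ reads $\left(14d^{2}-18d\right)\tau+4t_{2}+3t_{3}\geq 4\sum_{r\geq5}(r-4)t_{r}$, and dividing by $4$ produces precisely the claimed bound with the coefficients $\tfrac72 d^{2}-\tfrac92 d$, $t_{2}$ and $\tfrac34 t_{3}$.

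The genuinely delicate points are geometric rather than computational. First, I must be sure the chosen elliptic curves are pairwise disjoint and disjoint from everything else entering the estimate: curves over a fixed triple point are disjoint copies of a single Fermat curve, and curves over distinct points of $\mathbb{P}^{2}$ clearly cannot meet, so taking $k=0$ and $p=t_{3}\cdot 3^{\tau-4}$ is legitimate. Secondly, for some $d$ (e.g.\ $d=3$) the branch curves themselves have low genus and may produce further elliptic curves in the cover; however, the Miyaoka--Sakai inequality only requires a lower bound from \emph{some} disjoint family, so discarding those extra curves is harmless and keeps the argument uniform in $d$. The main place where the proof could go wrong is the last paragraph's bookkeeping of $t_{r},f_{0},f_{1}$, where a single sign or coefficient slip would spoil the exact match with the stated inequality.
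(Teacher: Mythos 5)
Your proposal is correct and follows essentially the same route as the paper: apply the Miyaoka--Sakai improvement to $Y_{3}^{\mathcal{C}}$ using the $t_{3}\cdot 3^{\tau-4}$ disjoint elliptic curves of self-intersection $-3$ over the triple points, identify $3c_{2}-c_{1}^{2}$ with $3^{\tau-3}H_{\mathcal{C}}(3)$, and expand $H_{\mathcal{C}}(3)=(14d^{2}-18d)\tau+4t_{2}+4t_{3}-4\sum_{r\geq5}(r-4)t_{r}$. Your bookkeeping (including the vanishing $t_{4}$ coefficient) checks out and matches the paper's computation exactly.
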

\begin{proof}
Considering the Hirzebruch polynomial with $n=3$ and Miyaoka-Sakai's improvement we have
$$3^{\tau-3}\left( (14d^{2} - 18d)\tau +4t_{2} + 4t_{3} -4 \sum_{r\geq 5}(r-4)t_{r} \right) \geq \sum_{j} (-C_{j}^{2}) = 3\cdot t_{3} 3^{\tau - 4},$$
which implies that
$$3^{\tau-3}\left( (14d^{2} - 18d)\tau +4t_{2} + 3t_{3} -4 \sum_{r\geq 5}(r-4)t_{r} \right) \geq 0.$$
After dividing by $4\cdot 3^{\tau-1}$ we get
$$\left(\frac{7}{2}d^{2} - \frac{9}{2}d \right)\tau + t_{2} + \frac{3}{4}t_{3} \geq \sum_{r\geq 5}(r-4)t_{r}$$
which completes the proof.
\end{proof}

Another Hirzbruch-type inequality has the following form.

\begin{theorem}
Let $\mathcal{C}$ be a $d$-configuration. Then
$$\left(5d^{2}-6d\right)\tau + t_{2} + \frac{3}{4}t_{3} \geq \sum_{r\geq 5}(2r-9)t_{r}.$$
\end{theorem}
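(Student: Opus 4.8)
The plan is to run the argument of the previous theorem verbatim, but with the Kummer exponent $n=2$ in place of $n=3$. This is the natural choice: by the Proposition, $n=2$ is precisely the exponent for which the cover $Y_{2}^{\mathcal{C}}$ acquires \emph{both} rational and elliptic curves, so the Miyaoka--Sakai improvement has the most to contribute. First I would evaluate the Hirzebruch polynomial \eqref{Hirzpoly} at $n=2$. Substituting $n=2$ and collecting the quadratic, linear, and constant contributions gives
$$H_{\mathcal{C}}(2) = (5d^2-6d)\tau + 9f_0 - 2f_1 - 4t_2.$$
Using $f_0=\sum_{r\geq 2}t_r$ and $f_1=\sum_{r\geq 2}r t_r$ to write $9f_0-2f_1=\sum_{r\geq 2}(9-2r)t_r$, and then separating the terms with $r\in\{2,3,4\}$ from the remainder, I would arrive at
$$H_{\mathcal{C}}(2) = (5d^2-6d)\tau + t_2 + 3t_3 + t_4 - \sum_{r\geq 5}(2r-9)t_r.$$

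Next I would record the negative curves on $Y_{2}^{\mathcal{C}}$ furnished by the Proposition. For $n=2$ the cover carries $t_3\,2^{\tau-4}$ rational curves of self-intersection $-2$ and $t_4\,2^{\tau-5}$ elliptic curves of self-intersection $-4$. Feeding these into the Miyaoka--Sakai improvement, the elliptic curves contribute $\sum_j(-C_j^2)=4\cdot t_4\,2^{\tau-5}=t_4\,2^{\tau-3}$, while each single rational $(-2)$-curve contributes $m(E_i)=\tfrac{9}{2}$ by Hemperly's result, so that $\sum_i m(E_i)=\tfrac{9}{2}\cdot t_3\,2^{\tau-4}=\tfrac{9}{4}\,t_3\,2^{\tau-3}$.

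Finally I would assemble the estimate. Since $3e(Y_{2}^{\mathcal{C}})-K_{Y_{2}^{\mathcal{C}}}^2=H_{\mathcal{C}}(2)\cdot 2^{\tau-3}$, the Miyaoka--Sakai bound reads
$$H_{\mathcal{C}}(2)\cdot 2^{\tau-3} \geq t_4\,2^{\tau-3} + \tfrac{9}{4}\,t_3\,2^{\tau-3}.$$
Dividing by $2^{\tau-3}$ cancels the $t_4$ terms and combines the $t_3$ terms as $3t_3-\tfrac{9}{4}t_3=\tfrac{3}{4}t_3$, which leaves exactly the asserted inequality
$$\left(5d^2-6d\right)\tau + t_2 + \tfrac{3}{4}t_3 \geq \sum_{r\geq 5}(2r-9)t_r.$$

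The step I expect to require the most care is the legitimacy of applying the Miyaoka--Sakai improvement when $n=2$. As noted in the excerpt, for $n=2$ the surface $Y_{2}^{\mathcal{C}}$ need not be of general type but may be elliptic, whereas the improvement is stated for surfaces of general type; I would therefore need to confirm that the underlying Miyaoka and Sakai estimates persist in the elliptic case, or else restrict to those configurations for which $Y_{2}^{\mathcal{C}}$ is of general type. By contrast, the polynomial evaluation and the tally of curve contributions are an entirely routine transcription of the $n=3$ computation performed for the previous theorem.
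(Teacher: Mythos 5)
Your proposal is correct and follows essentially the same route as the paper: evaluate $H_{\mathcal{C}}(2) = (5d^2-6d)\tau + t_2 + 3t_3 + t_4 - \sum_{r\geq 5}(2r-9)t_r$, feed in the $t_3 2^{\tau-4}$ rational $(-2)$-curves with Hemperly's constant $\frac{9}{2}$ and the $t_4 2^{\tau-5}$ elliptic $(-4)$-curves via Miyaoka--Sakai, and divide by $2^{\tau-3}$. Your expansion even silently corrects a typo in the paper's display (which reads $t_2 + 3t_4 + t_4$ where $t_2+3t_3+t_4$ is meant), and the caveat you flag about applying the improvement when $Y_2^{\mathcal{C}}$ is merely elliptic rather than of general type is a genuine subtlety that the paper passes over without comment.
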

\begin{proof}
Considering the Hirzebruch polynomial with $n=2$ and Miyaoka-Sakai's improvement we have
$$2^{\tau-3} \left( (5d^{2}-6d)\tau +t_{2} + 3t_{4} + t_{4} - \sum_{r\geq 5}(2r-9)t_{r} \right) \geq \frac{9}{2}\cdot t_{3}2^{\tau - 4} + 4\cdot t_{4}2^{\tau-5},$$
which implies that 
$$2^{\tau-3} \left( (5d^{2}-6d)\tau +t_{2} + \frac{3}{4}t_{3} - \sum_{r\geq 5}(2r-9)t_{r} \right) \geq 0.$$
After dividing by $2^{\tau -3}$ we obtain 
$$\left(5d^{2}-6d\right)\tau + t_{2} + \frac{3}{4}t_{3} \geq \sum_{r\geq 5}(2r-9)t_{r},$$
which completes the proof.
\end{proof}

\section{$d$-configurations and their characteristic numbers}

In this section we come back to Hirzebruch's idea of the so-called characteristic numbers of line configurations. Our aim is to study this object for arbitrary $d$-configurations. We defined \emph{ the characteristic number} of a given $d$-configuration $\mathcal{C}$ with $d\geq 1$ by
\begin{equation}
\label{gamma}
\gamma(\mathcal{C}) = \lim_{n\rightarrow \infty} \frac{c_{1}^{2}(Y_{n}^{\mathcal{C}})}{c_{2}(Y_{n}^{\mathcal{C}})} = \frac{9 + d^{2}\tau -6d\tau +3f_{1} -4f_{0}}{3 + (d^{2}-3d)\tau+f_{1}-f_{0}}.
\end{equation} 
Our main result is the following.
\begin{theorem}
Let $\mathcal{C}$ be a $d$-configuration. We may here assume additionally that for $d=1$ one has $t_{\tau-1}=0$ and $\tau \geq 6$. Then $\gamma(\mathcal{C}) \leq \frac{8}{3}$ and $\gamma(\mathcal{C}) = \frac{8}{3}$ if and only if $\mathcal{C}$ is the dual-Hesse configuration of lines.
\end{theorem}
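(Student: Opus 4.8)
The plan is to turn the inequality $\gamma(\mathcal{C})\le\tfrac{8}{3}$ into a polynomial inequality in the combinatorial data and then feed in the Hirzebruch-type inequalities already proved. Writing $N$ and $D$ for the numerator and denominator of \eqref{gamma}, I would first record that $D>0$: for $d\ge 3$ this is clear since $D=3+(d^2-3d)\tau+(f_1-f_0)$ with $d^2-3d\ge 0$ and $f_1-f_0=\sum_{r\ge 2}(r-1)t_r\ge 0$, while for $d\in\{1,2\}$ one uses that $D$ is the leading coefficient of the Euler number $c_2(Y_n^{\mathcal{C}})$, which is positive because $Y_n^{\mathcal{C}}$ is of general type for $n\ge 3$ (the hypotheses exclude the degenerate near-pencil case, where $D$ vanishes). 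Since $D>0$, clearing denominators gives the equivalence
\[ \gamma(\mathcal{C})\le\tfrac{8}{3}\iff 8D-3N=(5d^2-6d)\tau+4f_0-f_1-3\ge 0, \]
and because $4f_0-f_1=\sum_{r\ge 2}(4-r)t_r=2t_2+t_3-\sum_{r\ge 5}(r-4)t_r$, the target reads $(5d^2-6d)\tau+2t_2+t_3-\sum_{r\ge 5}(r-4)t_r\ge 3$.

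Next I would eliminate the tail $\sum_{r\ge 5}(r-4)t_r$ using the first Hirzebruch-type inequality above, which bounds it from above by $\bigl(\tfrac{7}{2}d^2-\tfrac{9}{2}d\bigr)\tau+t_2+\tfrac{3}{4}t_3$. Substituting collapses the target to
\[ \tfrac{3}{2}d(d-1)\tau+t_2+\tfrac{1}{4}t_3\ge 3. \]
For $d\ge 2$ the first term alone is $\ge\tfrac{3}{2}\cdot 2\cdot\tau=3\tau\ge 12$ (as $\tau\ge 4$), so the inequality is strict; hence $\gamma(\mathcal{C})<\tfrac{8}{3}$ whenever $d\ge 2$, and in particular equality can occur only for $d=1$.

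This reduces everything to the line case $d=1$, where the claim becomes the purely combinatorial statement $t_2+\tfrac{1}{4}t_3\ge 3$, and this is where the work lies. I would combine three ingredients: Hirzebruch's classical inequality (the case $d=1$ of the first theorem above), which reads $t_2+\tfrac{3}{4}t_3\ge\tau$; the B\'ezout identity $\sum_{r\ge 2}r(r-1)t_r=\tau(\tau-1)$ coming from the fact that each of the $\binom{\tau}{2}$ pairs of lines meets in one point; and the hypothesis $\tau\ge 6$. Assuming $t_2+\tfrac14 t_3<3$ forces $4t_2+t_3\le 11$, and combining this with $t_2+\tfrac34 t_3\ge\tau$ bounds $\tau\le 8$, so $\tau\in\{6,7,8\}$; for each such $\tau$ the two linear inequalities pin $(t_2,t_3)$ to a short list with $t_3$ large, which then violates $6t_3\le\tau(\tau-1)$. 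The main obstacle is exactly this low-$\tau$ analysis: the constant bound $3$ is invisible to Hirzebruch's inequality or to B\'ezout individually, and it is the tension between them for the few small arrangements that must be exploited (the hypotheses $\tau\ge 6$ and $t_{\tau-1}=0$ are used precisely to make Hirzebruch's inequality available and to cut off the remaining small cases).

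Finally, for the equality statement, $\gamma(\mathcal{C})=\tfrac{8}{3}$ forces $d=1$ together with equality in both steps above: equality in Hirzebruch's inequality, i.e.\ $\sum_{r\ge 5}(r-4)t_r=t_2+\tfrac{3}{4}t_3-\tau$, and $t_2+\tfrac{1}{4}t_3=3$. The latter leaves only $(t_2,t_3)\in\{(0,12),(1,8),(2,4),(3,0)\}$. Inserting each into the Hirzebruch equality (which needs $t_2+\tfrac34 t_3-\tau\ge 0$) and into the B\'ezout identity eliminates all but $(0,12)$: the cases $(2,4)$ and $(3,0)$ force $\tau\le 5$ against $\tau\ge 6$, the case $(1,8)$ forces $\tau\le 7$ while B\'ezout forces $\tau\ge 8$, and $(0,12)$ forces $\tau=9$ with no further singular points. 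The remaining step, which I expect to be the subtle point of the equality analysis, is the rigidity assertion that the numerical data $(\tau,t_2,t_3)=(9,0,12)$ is realized by a unique line arrangement, namely the dual-Hesse configuration.
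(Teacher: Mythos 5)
Your proposal is correct in substance, and it splits into a part that mirrors the paper and a part that is genuinely different. For $d\ge 2$ your reduction is essentially the paper's argument in disguise: the paper assumes $\gamma(\mathcal{C})\ge 8/3$, combines the resulting inequality $3+6d\tau-5d^2\tau-t_2+\sum_{r\ge5}(r-4)t_r\ge t_2+t_3$ with $H_{\mathcal C}(3)\ge 0$ (which gives $(\tfrac72 d^2-\tfrac92 d)\tau+t_2+t_3\ge\sum_{r\ge5}(r-4)t_r$), and derives $2t_2\le 6+3d\tau(1-d)<0$; you perform the same elimination directly, using the sharper Miyaoka--Sakai-improved version with $\tfrac34 t_3$, and land on the equivalent conclusion $\tfrac32 d(d-1)\tau+t_2+\tfrac14 t_3\ge 3$ with strictness for $d\ge2$. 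Where you genuinely diverge is the case $d=1$ and the equality characterization: the paper simply cites Urz\'ua (Proposition II.8) and Sommese for $d=1$ and does not reprove the dual-Hesse rigidity, whereas you give a self-contained combinatorial argument from Hirzebruch's inequality, the identity $\sum_r r(r-1)t_r=\tau(\tau-1)$, and $\tau\ge 6$; I checked your case analysis ($\tau\in\{6,7,8\}$ for the inequality, and $(t_2,t_3)\in\{(0,12),(1,8),(2,4),(3,0)\}$ for equality) and it is sound. This buys a more elementary and transparent proof at the cost of two loose ends. First, your justification of $D>0$ for $d=2$ is too quick: positivity of $c_2(Y_n^{\mathcal C})$ for every $n\ge3$ only yields $D\ge 0$ for the leading coefficient; the paper closes this by showing the linear coefficient $-( \tau-f_1+f_0)$ of $c_2$ is strictly negative (via $\tau\le f_0$ and $2f_0\le f_1$), which forces $D>0$. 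Second, the final rigidity step --- that $(\tau,t_2,t_3)=(9,0,12)$ with no higher-order points is realized only by the dual Hesse arrangement --- is asserted but not proved; it is a known classical fact, but as written it is the one genuinely missing ingredient and would need a proof or a citation.
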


\begin{proof}

One needs to combine \cite[Proposition II.8.]{Urzua} or \cite[Theorem 5.6]{Som} for $d =1$ and \cite[Theorem 3.6]{CP} for $d\geq 2$. For the completeness, let us present a detailed proof for $d\geq 2$.

We need to show that for a given $d$-configuration $\mathcal{C}$ we have $3 + (d^{2}-3d)\tau+f_{1}-f_{0} > 0$, which is obvious for $d\geq 3$, so let us consider $d=2$. By \cite{Tang} we know that for exponents $n\geq 3$ surfaces $Y_{n}^{\mathcal{C}}$ are of general type and it implies that $c_{2}(Y_{n}^{\mathcal{C}}) > 0$. Observe that $c_{2}(Y_{n}^{\mathcal{C}})$ is the quadratic polynomial with respect to $n$ and by $c_{2}(Y_{n}^{\mathcal{C}}) > 0$ we have $3 -2\tau +f_{1} - f_{0} \geq 0$. Now we show that the strict inequality also holds. To this end, we need to prove that the linear coefficient of $c_{2}(Y_{n}^{\mathcal{C}})$ is strictly negative, i.e., 
$$\tau - f_{1} + f_{0} <0.$$
We have the following inequalities:
\begin{itemize}
\item $2f_{0} \leq f_{1}$,
\item $\tau \leq f_{0}$, which is a consequence of \cite[Lemma 4.3]{PRSz}.
\end{itemize}
Thus we have
$$\tau + f_{0} \leq 2f_{0} \leq f_{1}.$$
Now, if $t_{2} < f_{0}$, then $2f_{0} < f_{1}$, and if $t_{2} = f_{0}$, then $t_{2} = 2(\tau^{2} - \tau) > \tau$, which completes the proof of our claim.

Suppose that $\gamma(\mathcal{C}) \geq 8/3$, then we have
$$3 + 6d\tau -5d^{2}\tau -t_{2} + \sum_{r\geq 5}(r-4)t_{r} \geq t_{2} + t_{3}.$$
Now using results from \cite{PRSz, Tang} we know that for a given $d$-configuration $\mathcal{C}$ one has
$H_{\mathcal{C}}(3) \geq 0$ and this provides us the following inequality
$$\bigg( \frac{7}{2}d^{2} - \frac{9}{2}d\bigg)\tau + t_{2} + t_{3} \geq \sum_{r\geq 5}(r-4)t_{r}.$$
Finally we have
$$3 + 6d\tau -5d^{2}\tau -t_{2} + \sum_{r\geq 5}(r-4)t_{r} \geq t_{2} + t_{3} \geq \bigg( \frac{9}{2}d - \frac{7}{2}d^{2}\bigg)\tau + \sum_{r\geq 5}(r-4)t_{r},$$
which gives
$$2t_{2} \leq 6 + 3d\tau(1 - d ) <0,$$
a
contradiction. 

\end{proof}

Let us observe that using $\gamma \leq 8/3$ for $d$-configurations we can obtain the following inequality.

\begin{proposition}
\label{f1}
Let $\mathcal{C}$ be a $d$-configuration. Then one has
$$3 + \sum_{r\geq 2}(r-4)t_{r} \leq (5d^{2} - 6d)\tau.$$
\end{proposition}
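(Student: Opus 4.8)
The plan is to unwind the characteristic-number inequality $\gamma(\mathcal{C}) \leq 8/3$ from the preceding theorem into the claimed linear inequality in the $t_r$'s. Recall that
$$
\gamma(\mathcal{C}) = \frac{9 + d^{2}\tau - 6d\tau + 3f_{1} - 4f_{0}}{3 + (d^{2}-3d)\tau + f_{1} - f_{0}},
$$
and that the theorem's proof established the denominator $3 + (d^{2}-3d)\tau + f_{1} - f_{0} > 0$. So the first step is to clear this positive denominator in $\gamma(\mathcal{C}) \leq 8/3$, i.e. to write $3\bigl(9 + d^{2}\tau - 6d\tau + 3f_{1} - 4f_{0}\bigr) \leq 8\bigl(3 + (d^{2}-3d)\tau + f_{1} - f_{0}\bigr)$, which is legitimate precisely because the denominator is known to be strictly positive.

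Next I would expand both sides and collect like terms. The numerator times $3$ gives $27 + 3d^{2}\tau - 18d\tau + 9f_{1} - 12f_{0}$, and $8$ times the denominator gives $24 + 8d^{2}\tau - 24d\tau + 8f_{1} - 8f_{0}$. Moving everything to one side, the inequality becomes
$$
3 - 5d^{2}\tau + 6d\tau + f_{1} - 4f_{0} \leq 0,
$$
that is, $3 + f_{1} - 4f_{0} \leq (5d^{2} - 6d)\tau$. The final step is to recognize that $f_{1} - 4f_{0} = \sum_{r\geq 2} r t_{r} - 4\sum_{r\geq 2} t_{r} = \sum_{r\geq 2}(r-4)t_{r}$, using the definitions $f_{0} = \sum_{r\geq 2}t_{r}$ and $f_{1} = \sum_{r\geq 2}r t_{r}$ given in the excerpt. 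This substitution yields exactly $3 + \sum_{r\geq 2}(r-4)t_{r} \leq (5d^{2}-6d)\tau$, as claimed.

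There is essentially no hard part here: the proposition is a direct algebraic consequence of the $\gamma \leq 8/3$ bound. The only point requiring care -- and the single genuine obstacle -- is the sign of the denominator when clearing it, since multiplying an inequality by a quantity of unknown sign would reverse or invalidate it. But the positivity $3 + (d^{2}-3d)\tau + f_{1} - f_{0} > 0$ is exactly what was verified (case by case in $d$) inside the proof of the theorem, so I may invoke it freely. With that in hand, the remaining work is bookkeeping on coefficients, and the assumptions inherited for $d=1$ (namely $t_{\tau-1}=0$ and $\tau \geq 6$) carry over implicitly through the use of $\gamma(\mathcal{C}) \leq 8/3$.
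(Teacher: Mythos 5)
Your proposal is correct and matches the paper's intended argument: the paper derives this proposition precisely by unwinding $\gamma(\mathcal{C})\leq 8/3$ (with the positivity of the denominator $3+(d^{2}-3d)\tau+f_{1}-f_{0}$ established in the proof of the preceding theorem), and your algebra carrying this out is accurate.
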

It is worth pointing out that for $d=1$ the above inequality also holds in the case $t_{\tau-1} =1$  \cite[Proposition II.8.]{Urzua}, so the assumption that one needs only $t_{\tau} = 0$ is here optimal.

\section{$d$-configurations and ball quotients}

In this section we check whether there exists a $d$-configuration $\mathcal{C}$ with $d\geq 2$ such that the associated Kummer cover $Y_{n}^{\mathcal{C}}$ is a ball quotient. It turns out that in most cases the answer is negative. In order to observe this phenomenon we will extensively use the theory of constantly branched covers which was developed in \cite{BHH87}. Kummer extensions are nice examples of such constantly branched covers, thus we can apply a general theory to our problem. Let us recall some facts from \cite[Section 1.3]{BHH87}. We know that if $Y^{\mathcal{C}}_{n}$ is a ball quotient, then all irreducible components of the ramification divisor $\sigma^{*}(\bar{\mathcal{C}})$, where $\bar{\mathcal{C}}$ is the total transform of $\mathcal{C}$ in $Z$, must satisfy $\text{prop}(E) = 2E^{2} - e(E) = 0$. In particular, each irreducible component $C$ of $\sigma^{*}E_{p}$ satisfies $C^{2} = -n^{r_{p}-2}$ and
$$\text{prop}(C) = n^{r_{p}-2}((r_{p}-2)(n-1)-4).$$
If $C_{j}$ is an irreducible component of $\mathcal{C}$, then we denote by $\tilde{C}_{j} = \sigma^{*}(C_{j}^{'})$, where $C_{j}^{'}$ is the strict transform of $C_{j}$ under blowing-up $g$. Then we have
$$\text{prop}(\tilde{C}_{j}) = n^{\tau-3}( \text{prop}(C_{j}) + (n-1)(r_{j} - e(C_{j})) - 2\delta_{j}),$$
where $r_{j}$ denotes the total number of singular points on $C_{j}$, $\delta_{j}$ denotes the number of essential singular points, i.e., those with multiplicity $\geq3$, and $r_{j,2}$ denotes the number of double points. This leads to $r_{j} = \delta_{j} + r_{j,2}$. If $C_{j}$ is a smooth plane curve of degree $d\geq 1$, then we have $\text{prop}(C_{j}) = 3d^{2} - 3d$, $e(C_{j}) = 3d - d^{2}$, and finally for each $\tilde{C}_{j}$ one has
$$\text{prop}(\tilde{C}_{j}) = n^{\tau-3}(3d^{2}-3d + (n-1)(r_{j} + d^{2}-3d) - 2\delta_{j}).$$
Now, the condition $\text{prop}(C) = 0$ leads to
$$(n-1)(r_{p}-2) = 4,$$ and the following pairs are admissible (we have the following order of listing: $(n,r_{p})$):
$$(5,3), \quad (3,4), \quad (2,6).$$
It means that $Y_{n}^{\mathcal{C}}$ can be a ball quotient if one of the following conditions is satisfied:
\begin{itemize}
\item $t_{r} = 0$ for $r \neq 2, 3$ and $n = 5$,
\item $t_{r} = 0$ for $r \neq 2, 4$ and $n = 3$,
\item $t_{r} = 0$ for $r \neq 2, 6$ and $n = 2$.
\end{itemize}
Now we need to use $\text{prop}(\tilde{C}_{j}) = 0$, i.e.,
$$3d^{2} - 3d + (n-1)(r_{j} +d^{2} - 3d) = 2\delta_{j}.$$
Simple computations lead to the following cases:
\begin{itemize}
\item $n = 5, r_{p} = 3: \quad 4r_{j,2} + 2r_{j,3} = 15d - 7d^{2}$, and if $d\geq 3$, then there is no $d$-configuration such that the associated Kummer cover is a ball quotient. We will consider $d=2$ separately.
\item $n = 3, r_{p} = 4: \quad 2r_{j,2} = 9d - 5d^{2}$, and if $d\geq 2$, then there is no $d$-configuration such that the associated Kummer cover is a ball quotient.
\item $n = 2, r_{p} = 6: \quad r_{j,2} + 4d^{2}-6d = r_{j,6} .$
\end{itemize}
Let us come back to $\text{prop}(\tilde{C}_{j}) = 0$ once again using the above list. We have previously observed that
$$3d^{2}-3d + (n-1)(r_{j,2} + r_{j,r} + d^{2}-3d) = 2r_{j,r},$$
and using $(n-1) = \frac{4}{r-2}$ one has
$$(3d^{3} - 3d)(r-2) + 4(r_{j,2} + r_{j,r} + d^{2} - 3d) = 2(r-2)r_{j,r}.$$
Then
$$(2r-8)r_{j,r} - 4r_{j,2} -4d^{2} + 12d + (2-r)(3d^{2} - 3d)=0.$$
Now we can use the fact that $(r-1)r_{j,r} + r_{j,2} = d^{2} (\tau - 1)$ and we obtain
$$r_{j,2} + r_{j,r} = \frac{2d^{2}(\tau-1) + 12d - 4d^{2} + (2-r)(3d^{2}-3d)}{6}.$$
Consider the case $d=2$, $n=5$, and $r = r_{p}=3$. We have
$$r_{j,2} + r_{j,3} = \frac{8\tau - 6}{6},$$
and using $\sum_{j} r_{j,r} = rt_{r}$ one finally has
$$2t_{2} + 3t_{3} = f_{1} =  \frac{\tau(4\tau - 3)}{3}.$$
On the other hand, for conic configurations there is the following combinatorial equality
$$2(\tau^{2} - \tau ) = t_{2} + 3t_{3}.$$
Combining things one obtains
$$t_{2} = \frac{3\tau - 2\tau^{2}}{3} < 0,$$
a contradiction.

Thus there does not exist a conic configuration with double and triple points such that the associated Kummer cover of order $5^{\tau-1}$ is a ball quotient. 

Now we consider the last remaining case, i.e., $n=2$ and $r = r_{p} = 6$. Some tedious computations lead to 
$$r_{j,2} + r_{j,6} = \frac{d^{2}(\tau-1) -8d^{2} + 12d}{3}$$ and 
$$2t_{2} + 6t_{6} = f_{1} = \tau \frac{d^{2}(\tau-1) -8d^{2} + 12d}{3}.$$
Using $2t_{2} + 30t_{6} = d^{2}(\tau^{2}-\tau)$ one obtains
\begin{equation}
\label{condition}
t_{6} = \frac{d\tau(d\tau + 3d - 6)}{36}, \quad t_{2} = \frac{d\tau(d\tau-21d+30)}{12}.
\end{equation}
It means that if we can find integers $d \geq 2, \tau \geq 4$ such that $t_{2}$ and $t_{6}$ are integers and configuration, denoted  by $\mathcal{C}_{d,\tau}$, is geometrically realizable over the complex numbers, then the associated Kummer cover $Y_{2}^{\mathcal{C}_{d,\tau}}$ might be a ball quotient. For instance, if $d=2$, then taking $\tau = 3m$ with $m\geq 3$ one has 
$$t_{6} = m^{2}, \quad t_{2} = 3m(m-2).$$
However, easy computations reveal that 
$$H_{\mathcal{C}_{2,3m}}(2) \neq 0.$$
\begin{theorem}
There does not exist a conic configuration $\mathcal{C}$ such that $Y_{n}^{\mathcal{C}}$ is a ball quotient.
\end{theorem}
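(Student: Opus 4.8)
The plan is to argue by contradiction, playing off two necessary conditions for a ball quotient against one another. If $Y_n^{\mathcal{C}}$ is a ball quotient for a conic configuration $\mathcal{C}$ (so $d=2$) with exponent $n\geq 2$, then on the one hand equality holds in the Bogomolov--Miyaoka--Yau inequality, i.e.\ $H_{\mathcal{C}}(n)=0$, and on the other hand every irreducible component $E$ of the ramification divisor satisfies $\mathrm{prop}(E)=0$. The $\mathrm{prop}=0$ condition on the exceptional curves over a point of multiplicity $r_p\geq 3$ has already been reduced to $(n-1)(r_p-2)=4$, and hence to the three admissible pairs $(n,r_p)\in\{(5,3),(3,4),(2,6)\}$. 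Since the pairs $(5,3)$ and $(3,4)$ have been ruled out above for $d=2$ (yielding $t_2<0$ and the impossible relation $2r_{j,2}=-2$ respectively), the entire theorem reduces to eliminating the single remaining case $(n,r_p)=(2,6)$, together with the degenerate possibility that $\mathcal{C}$ carries no point of multiplicity $\geq 3$ at all.

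For the case $(2,6)$ I would first use the $\mathrm{prop}(\tilde C_j)=0$ relations, summed over all components and combined with the combinatorial identity $2t_2+30t_6=d^2(\tau^2-\tau)$, to pin down $t_2$ and $t_6$ exactly; for $d=2$ this is precisely \eqref{condition}, namely $t_6=\tau^2/9$ and $t_2=\tau(\tau-6)/3$. Integrality of $t_6$ already forces $3\mid\tau$, so these are genuinely all the admissible configurations and no stray values of $\tau$ escape the analysis. The decisive step is then to feed these values into the Hirzebruch polynomial \eqref{Hirzpoly}. Specializing \eqref{Hirzpoly} to $n=2$, $d=2$ and to a configuration with $t_r=0$ for $r\neq 2,6$ (so that $f_0=t_2+t_6$ and $f_1=2t_2+6t_6$), the polynomial collapses to
\[
H_{\mathcal{C}}(2)=9f_0+8\tau-2f_1-4t_2=t_2-3t_6+8\tau,
\]
and substituting the pinned-down values gives $H_{\mathcal{C}}(2)=6\tau$.

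Since $\tau\geq 4$, this forces $H_{\mathcal{C}}(2)=6\tau>0$, in direct contradiction with the ball-quotient requirement $H_{\mathcal{C}}(2)=0$; this finishes the case $(2,6)$. It remains to treat the purely nodal possibility, which is the only one left open when $n\notin\{2,3,5\}$ (there $(n-1)(r_p-2)=4$ has no integer solution $r_p\geq 3$, so no point of multiplicity $\geq 3$ can occur). In that situation there are no exceptional components, and $\mathrm{prop}=0$ must hold on each strict transform $\tilde C_j$; for $d=2$ this reads $6+(n-1)(r_{j,2}-2)=0$, which is impossible, since by Bézout two conics of the configuration meet in four distinct double points and hence $r_{j,2}=4(\tau-1)\geq 12$, making the left-hand side strictly positive.

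The main obstacle is not any individual computation — each collapse is routine once set up — but rather organizing the case analysis so that it is provably exhaustive in $n$: one has to verify that every exponent $n\geq 2$ either matches one of the three admissible pairs or is forced into the nodal case, and that the $\mathrm{prop}$ conditions genuinely determine every $t_r$, so that the single evaluation $H_{\mathcal{C}}(2)=6\tau$ suffices to dispose of the whole family at once rather than merely the sample $\tau=3m$ exhibited above.
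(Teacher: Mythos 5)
Your proposal is correct and follows the paper's own route: reduce to the three admissible pairs $(n,r_p)$ via the $\mathrm{prop}=0$ condition on exceptional curves, kill $(5,3)$ and $(3,4)$ for $d=2$ by the sign computations, and dispose of $(2,6)$ by substituting the forced values of $t_2,t_6$ into the Hirzebruch polynomial at $n=2$. You in fact tighten the argument in three places the paper leaves implicit --- the explicit evaluation $H_{\mathcal{C}}(2)=6\tau>0$ where the paper only asserts $H_{\mathcal{C}_{2,3m}}(2)\neq 0$, the observation that integrality of $t_6=\tau^2/9$ makes $\tau=3m$ exhaustive rather than merely an instance, and the purely nodal case (which also covers all exponents $n\notin\{2,3,5\}$), which the paper treats only afterwards in the example of general conic configurations.
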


This phenomenon can be explained in a slightly different way: for conic configurations $H_{\mathcal{C}}(n)$ cannot be written as a polynomial relation using $\text{prop}(\tilde{C}_{j})$ and $\text{prop}(\sigma^{*}E_{p})$ -- this can be done for line configurations.

Now we are ready to formulate another main result of this paper.
\begin{theorem}
Let $\mathcal{C}$ be a $d$-configuration with $d\geq 3$ and assume that $\mathcal{C}$ is not projective equivalent to some $\mathcal{C}_{d,\tau}$. Then the associated Kummer cover $Y_{n}^{\mathcal{C}}$ is never a ball quotient.
\end{theorem}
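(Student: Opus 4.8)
The plan is to assemble the case analysis carried out above into a single argument, using the fact that the hypothesis $d\geq 3$ kills all of the admissible branching data except one, and that the single surviving case is exactly the one excluded by the hypothesis on $\mathcal{C}$.

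First I would invoke the theory of constantly branched covers from \cite{BHH87}: if $Y_n^{\mathcal{C}}$ is a ball quotient, then every irreducible component $E$ of the ramification divisor $\sigma^*(\bar{\mathcal{C}})$ must satisfy $\text{prop}(E)=2E^2-e(E)=0$. Applying this to the components $C$ of $\sigma^*E_p$ forces $(n-1)(r_p-2)=4$, so the only admissible pairs are $(n,r_p)\in\{(5,3),(3,4),(2,6)\}$, each accompanied by the vanishing of $t_r$ for every remaining value of $r$. This reduces the question to three mutually exclusive combinatorial regimes.

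Next I would impose the second proportionality condition $\text{prop}(\tilde{C}_j)=0$ on the strict transforms of the components of $\mathcal{C}$, which in these three regimes reads
$$4r_{j,2}+2r_{j,3}=15d-7d^2,\qquad 2r_{j,2}=9d-5d^2,\qquad r_{j,2}+4d^2-6d=r_{j,6},$$
respectively. Here the hypothesis $d\geq 3$ does the decisive work: the right-hand sides $15d-7d^2=d(15-7d)$ and $9d-5d^2=d(9-5d)$ are both strictly negative for $d\geq 3$, whereas their left-hand sides are sums of non-negative integers. Hence the pairs $(5,3)$ and $(3,4)$ are impossible, and only the case $n=2$, $r_p=6$ can possibly survive.

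It then remains to treat this surviving regime. I would run the combinatorial bookkeeping exactly as in the conic computation above: summing $\text{prop}(\tilde{C}_j)=0$ over all components via $\sum_j r_{j,r}=r\,t_r$ and the incidence relation $(r-1)r_{j,r}+r_{j,2}=d^2(\tau-1)$, together with $2t_2+30t_6=d^2(\tau^2-\tau)$, pins down $t_2$ and $t_6$ to the precise values recorded in \eqref{condition}. A configuration having only double and sixfold points in exactly these numbers is by definition projectively equivalent to some $\mathcal{C}_{d,\tau}$; since we have assumed $\mathcal{C}$ is not of this type, this last case is also excluded, and we conclude that $Y_n^{\mathcal{C}}$ is never a ball quotient. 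The main obstacle is precisely this final regime $n=2$, $r_p=6$: unlike the other two it is not ruled out by a sign argument, so one must carry out the full combinatorial computation to see that it forces exactly the numerology of $\mathcal{C}_{d,\tau}$, whereas everything else follows from positivity once $d\geq 3$.
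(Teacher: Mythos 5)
Your proposal is correct and follows essentially the same route as the paper: the paper's ``proof'' of this theorem is precisely the case analysis preceding the statement, namely that $\mathrm{prop}(C)=0$ on the exceptional components forces $(n,r_p)\in\{(5,3),(3,4),(2,6)\}$, that the sign of $15d-7d^2$ and $9d-5d^2$ eliminates the first two cases for $d\geq 3$, and that the remaining case $(2,6)$ forces the numerology \eqref{condition} defining $\mathcal{C}_{d,\tau}$, which is excluded by hypothesis. Your write-up merely assembles these steps explicitly, which is exactly what the paper intends.
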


Let us now focus on some examples of surfaces constructed via Hirzebruch's method.

\begin{example}
In \cite{AD} the authors constructed the Hesse configuration of smooth conics $\mathcal{AD}$ consisting of $12$ conics, $12$ double points, and $9$ points of multiplicity $8$. Let us now consider the Kummer cover $Y_{n}^{\mathcal{AD}}$ of order $n^{11}$ branched along $\mathcal{AD}$. We can compute easily the Chern numbers, namely
$$c_{2}(Y_{n}^{\mathcal{AD}}) = 54n^{11} - 126n^{10} + 84n^{9},$$
$$c_{1}^{2}(Y_{n}^{\mathcal{AD}}) = 117n^{11} -252n^{10} + 135n^{9}.$$
Let us denote by $E(Y_{n}^{\mathcal{AD}})$ the Chern slope of $Y_{n}^{\mathcal{AD}}$, i.e., $E(Y_{n}^{\mathcal{AD}}) = \frac{c_{1}^{2}(Y_{n}^{\mathcal{AD}})}{c_{2}(Y_{n}^{\mathcal{AD}})}$. We have the following 
\begin{center}
\begin{tabular}{r|c|c|c|c|c} 
$n$ &  2 & 3 & 4 & 5 & $\gamma(\mathcal{AD})$ \\ \hline
$E(Y_{n}^{\mathcal{AD}})$ & 2.0625 & 2.25 & 2.25 & 2.2388 & 2.16 
\end{tabular}.
\end{center}
\end{example}
\begin{example}
We assume that $\mathcal{G}_{\tau}$ is a configuration of $\tau \geq 4$ general conics. It means that $t_{2} = 2(\tau^{2} - \tau)$ and $t_{r} = 0$ for $r>2$. Easy computations show that (we use the change of coordinates $n \mapsto x+1$):
$$H_{\mathcal{G}_{k}}(x) = 2\tau x(\tau x + 3) > 0$$ for every $x\geq 1$.
Now we would like to look at values of the Chern numbers. We have
$$c_{1}^{2}(Y_{n}^{\mathcal{G}_{k}})/n^{\tau-3} = (4\tau^{2} - 12\tau + 9)n^{2} + 4n(3\tau - 2\tau^{2}) + 4\tau^{2},$$
$$c_{2}(Y_{n}^{\mathcal{G}_{k}})/n^{\tau-3} = (2\tau^{2} - 4\tau + 3)n^{2} + 2n(3\tau - 2\tau^{2}) + 2\tau^{2} - 2\tau .$$
In particular, for $\tau = 4$ we have
$$c_{1}^{2}(Y_{n}^{\mathcal{G}_{4}}) = 25n^{3} - 80n^2 + 64n,$$
$$c_{2}(Y_{n}^{\mathcal{G}_{4}}) = 19n^3 - 40n^2 + 24n$$
and for $n=2$ one has $c_{1}^{2}(Y_{2}^{\mathcal{G}_{4}}) = 8$, $c_{2}(Y_{2}^{\mathcal{G}_{4}}) = 40$, and finally
$$E(Y_{2}^{\mathcal{G}_{4}}) = \frac{1}{5}.$$
On the other hand,
$$\lim_{\tau, n \rightarrow \infty} E(Y_{n}^{\mathcal{G}_{\tau}}) = 2.$$
It is worth pointing out that in this case our surface $X_{n}$ is smooth since there are only double points.
\end{example}
At the end of the paper, let us also recall a quite challenging problem which appears in \cite[p.~116]{BHH87}. This question is strictly related to our problem about configurations $\mathcal{C}_{d,\tau}$.  The authors have proposed a sequence of (combinatorial) line configurations $\mathcal{L}_{m}$ for $m\geq 3$ with $\tau = 12m+3$, $t_{2} = 12m^{2} + 15m+3$, and $t_{6} = 4m^{2} + m$. It can be easily checked that $H_{\mathcal{L}_{m}}(2) = 0$ for every $m\geq 3$. However, it is not known whether $\mathcal{L}_{m}$'s are realizable geometrically with straight lines. 

In a very recent paper, Shnurnikov \cite{Shnurnikov} has shown that if $\mathcal{L}$ is a line configuration realizable over the real numbers with $\tau \geq 6$ and $t_{\tau} = t_{\tau-1} = t_{\tau - 2} = t_{\tau -3} = 0$, then
$$t_{2} + \frac{3}{2}t_{3} \geq 8 + \sum_{r\geq 4}(2r-7.5)t_{r}.$$
Again, it is easy to check that $\mathcal{L}_{m}$'s do not satisfy Shnurnikov's inequality, thus they cannot be realized over the real numbers. This leads to the following problem.
\begin{problem}
Is it possible to realize $\mathcal{L}_{m}$ with $m \geq 3$ over the complex numbers?
\end{problem}
\section*{Acknowledgements}
I would like to express my gratitude to Professor Igor Dolgachev for pointing out Naruki's paper \cite{Naruki} and stimulating conversations on the topic of this paper, to Roberto Laface for conversations around the project, and to Xavier Roulleau for very useful remarks which allowed to improve this paper. The idea behind this paper grew up during the MFO Workshop \emph{Arrangements of Subvarieties, and their Applications in Algebraic Geometry} in 2016. I would especially like to thank Giancarlo Urz\'ua and Hal Schenck for stimulating conversations during this event. At last, I would like to thank the anonymous referee for useful suggestions. The author is partially supported by National Science Centre Poland Grant 2014/15/N/ST1/02102. 


\bigskip
   Piotr Pokora,
   Instytut Matematyki,
   Pedagogical University of Cracow,
   Podchor\c a\.zych 2,
   PL-30-084 Krak\'ow, Poland.

Current Address:
    Institut f\"ur Algebraische Geometrie,
    Leibniz Universit\"at Hannover,
    Welfengarten 1,
    D-30167 Hannover, Germany. \\
\nopagebreak
   \textit{E-mail address:} \texttt{piotrpkr@gmail.com, pokora@math.uni-hannover.de}


\end{document}